\theoremstyle{definition}
\newtheorem{definition}{Definition}[section]
\theoremstyle{plain}
\newtheorem{lemma}[definition]{Lemma}
\newtheorem{theorem}[definition]{Theorem}
\newtheorem{proposition}[definition]{Proposition}
\theoremstyle{remark}
\newcommand{\myint}{\operatorname{int}}
\newcommand{\mycl}{\operatorname{cl}}
\newcommand{\mydcl}{\operatorname{discl}}
\newcommand{\mydclo}{\operatorname{dcl}}
\newcommand{\myacl}{\operatorname{acl}}
\newcommand{\myrk}{\operatorname{rk}}
\newcommand{\mydim}{\operatorname{D}}
\DeclareMathSymbol{\mlq}{\mathord}{operators}{``}
\DeclareMathSymbol{\mrq}{\mathord}{operators}{`'}
\begin{document}
\title[Pregeometry over locally o-minimal structure and dimension]{Pregeometry over locally o-minimal structure and dimension}
\author[M. Fujita]{Masato Fujita}
\address{Department of Liberal Arts,
Japan Coast Guard Academy,
5-1 Wakaba-cho, Kure, Hiroshima 737-8512, Japan}
\email{fujita.masato.p34@kyoto-u.jp}

\begin{abstract}
We define a discrete closure operation for definably complete locally o-minimal structures $\mathcal M$.
The pair of the underlying set of $\mathcal M$ and the discrete closure operation forms a pregeometry.
We define the rank of a definable set over a set of parameters using this fact.
A definable set $X$ is of dimension equal to the rank of $X$ over the set of parameters of a formula defining the set $X$.
The structure $\mathcal M$ is simultaneously a first-order topological structure.
The dimension rank of a set definable in the first-order topological structure $\mathcal M$ also coincides with its dimension.
\end{abstract}

\subjclass[2020]{Primary 03C64}

\keywords{locally o-minimal structure, pregeometry, dimension}

\maketitle

\section{Introduction}\label{sec:intro}
We first explain the notions of dimension which are applicable to o-minimal structures.
 See \cite{vdD} for o-minimal structures.
Firstly, the notion of a pregeometry is a central notion of geometric stability theory \cite{Pillay}.
The definable closure operation and the algebraic closure operation are often employed as the closure operations in pregeometries.
Pillay and Steinhorn demonstrated that the universe of an o-minimal structure together with the definable closure operation forms a pregeometry in \cite{PS}.
The notion of rank on a pregeometry is discussed in \cite{Pillay2, HP}.
It gives a definition of dimension of a set definable in an o-minimal structure.

Secondly, Pillay proposed the notion of first-order topological structures and defined an ordinal-valued dimension in \cite{Pillay3}.
We call it dimension rank in this paper.
An o-minimal structure is a first-order topological structure, and the dimension rank is a possible candidate of dimension of a definable set.
Thirdly, van den Dries proposed an alternative definition of model-theoretic dimension by giving axioms to be satisfied by a dimension function \cite{vdD2}.

Finally, when a structure admits a definable cell decomposition such as an o-minimal structure, another definition of dimension of a definable set is employed such as in \cite{vdD}.
A cell is definably homeomorphic to a definable open subset of $M^d$ for some nonnegative integer $d$, where $M$ is the universe of the o-minimal structure.
The dimension of the cell is defined as $d$.
The dimension of an arbitrary definable set is the maximum of the dimensions of cells contained in the given set.

Mathews demonstrated that the above four definitions of dimension coincide in \cite{Mathews}. 
We prove assertions similar to Mathews's when structures are definably complete locally o-minimal.

Local o-minimality is a variant of o-minimality \cite{TV}.
The author have investigated sets definable in definably complete locally o-minimal structures.
He first defined and investigated the dimension of a set definable in a locally o-minimal structure which admits local definable cell decomposition in \cite{Fuji, Fuji3}.
His definition of dimension in \cite{Fuji} does not assume that a definable set is locally decomposable into finitely many cells, but it coincides with the maximum dimension of cells contained in the definable set.
He gave another definition of dimension of a set definable in a definably complete locally o-minimal structure \cite{Fuji4}, and he and his collaborators clarified basic properties of dimension in \cite{Fuji4, FKK}. 
They showed that the dimension defined in \cite{Fuji4} coincides with the dimension defined in \cite{Fuji}, and it satisfies the requirements of dimension function given in \cite{vdD2}.
In this paper, we demonstrate the equivalence among our dimension, the dimension rank on a first-order topological structure and the rank on a pregeometry in the definbly complete locally o-minimal setting.

The pair of the universe of an o-minimal structure and the definable closure operation forms a pregeometry.
In this case, a definable set is of dimension zero if and only if it is a finite set.
There exists an infinite set of dimension zero when the structure is defnably complete locally o-minimal.
We cannot adopt the definable closure as the closure operation in our case.
Therefore, we develop the notion of `discrete closure' in this paper.
We show that the universe of a defiably complete locally o-minimal structure together with the discrete closure operation forms a pregeometry.

This paper is organized as follows:
We recall the properties of definably complete locally o-minimal structures in Section \ref{sec:preliminary}.
We also review the definition of pregeometries in this section.
Section \ref{sec:pregeometry} is the main part of this paper.
We define the discrete closure and show that it enjoys the exchange property.
It implies that the pair of the universe and the discrete closure operation forms a pregeometry.
Theorem \ref{thm:rank} says that our definition of dimension of a definable set coincides with the rank on the pregeometry defined above.
Finally, we treat the dimension rank on a first-order topological structure in Section \ref{sec:first-order}.

In the last of this section, we summarize the notations and terms used in this paper.
The interior, closure and frontier of a subset $A$ of a topological space is denoted by $\myint(A)$, $\overline{A}$ and $\partial A$, respectively.
For any set $S$, let $|S|$ denote the cardinality of $S$.

The term `definable' means `definable in the given structure with parameters' in this paper unless we clearly specify the parameters.
Let $\mathcal L$ be a language and $\mathcal M=(M,\ldots)$ be an $\mathcal L$-structure.
Let $\Phi(\overline{x})$ be a first-order $\mathcal L(M)$-formula with $n$ free variables and $S \subseteq M^n$ be the set defined by the formula $\Phi(\overline{x})$.
The notation $\Phi(\mathcal M)$ denotes the definable set $\{\overline{x} \in M^n\;|\; \mathcal M \models \Phi(\overline{x})\}$; that is, $S=\Phi(\mathcal M)$.
When $\mathcal N=(N,\ldots)$ is an elementary extension of $\mathcal M$, the notations $\Phi(\mathcal N)$ and $S^{\mathcal N}$ denotes the definable set $\{\overline{x} \in N^n\;|\; \mathcal N \models \Phi(\overline{x})\}$.
The set $S^{\mathcal N}$ is uniquely determined independently of the choice of formulas $\Phi$ defining $S$. 

Consider a linearly ordered set without endpoints $(M,<)$.
An open interval is a nonempty set of the form $\{x \in M\;|\; a < x < b\}$ for some $a,b \in M \cup \{\pm \infty\}$.
It is denoted by $(a,b)$ in this paper.
When an expansion of a dense linear order without endpoints $\mathcal M=(M,<,\ldots)$ is given, the set $M$ equips the order topology induced from the order $<$. 
The Cartesian product $M^n$ equips the product topology of the order topology for each positive integer $n$.
An open box is the Cartesian product of open intervals.
We consider that $M^0$ is a singleton with the trivial topology, and a projection from $M^n$ onto $M^0$ is the trivial map.
A nonempty open box in $M^0$ is $M^0$ itself.

\section{Preliminary}\label{sec:preliminary}
In this section, we recall the results in the previous studies.

\subsection{On local o-minimality}
We first recall the definitions of local o-minimality and definably completeness.
\begin{definition}[\cite{TV}]
An expansion of a dense linear order without endpoints $\mathcal M=(M,<,\ldots)$ is \textit{locally o-minimal} if, for every definable subset $X$ of $M$ and for every point $a\in M$, there exists an open interval $I$ containing the point $a$ such that $X \cap I$ is  a finite union of points and open intervals.
\end{definition}

\begin{definition}[\cite{M}]
An expansion of a dense linear order without endpoints $\mathcal M=(M,<,\ldots)$ is \textit{definably complete} if any definable subset $X$ of $M$ has the supremum and  infimum in $M \cup \{\pm \infty\}$.
\end{definition}

It is already known that definably completeness and local o-minimality are preserved under elementary equivalence by \cite[Corollary 2.5]{TV} and \cite[p.1788, Proposition]{M}.
We call that a complete theory is a \textit{definably complete locally o-minimal theory} if every model is definably complete and locally o-minimal.
A complete theory having a definably complete locally o-minimal model is a definably complete locally o-minimal theory.

We review basic properties of definably complete locally o-minimal structures.
\begin{lemma}\label{lem:local}
Consider a definably complete locally o-minimal structure $\mathcal M=(M,<,\ldots)$.
Any definable subset of $M$ having an empty interior is closed and discrete. 
\end{lemma}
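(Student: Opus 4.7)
The plan is to apply local o-minimality at each point of $M$ to obtain a local finiteness statement for $X$, then deduce closedness and discreteness as routine consequences. Let $X$ be a definable subset of $M$ with $\myint(X)=\emptyset$.

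First I would fix an arbitrary point $a\in M$. By local o-minimality, there exists an open interval $I$ containing $a$ such that $X\cap I$ is a finite union of points and open intervals. The crucial observation is that $X\cap I$ cannot contain any open interval: if $(c,d)\subseteq X\cap I$, then every point of $(c,d)$ would be an interior point of $X$, contradicting $\myint(X)=\emptyset$. Hence $X\cap I$ is a finite set.

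From this uniform local finiteness, both conclusions are immediate. For discreteness, if $a\in X$, write $X\cap I=\{a,x_1,\ldots,x_k\}$; since the order is dense, I can choose a subinterval $J\subseteq I$ with $a\in J$ avoiding each $x_i$, so $X\cap J=\{a\}$, showing $a$ is isolated in $X$. For closedness, take $a\in M\setminus X$ and write $X\cap I=\{x_1,\ldots,x_k\}$; again I can shrink $I$ to a smaller open interval $J\ni a$ disjoint from $\{x_1,\ldots,x_k\}$, so $J\cap X=\emptyset$. Thus $M\setminus X$ is open.

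There is no real obstacle here — the argument is essentially a direct unpacking of the definition of local o-minimality combined with the hypothesis on the interior. The only point that needs to be stated explicitly is that definable completeness plays no role in this particular lemma; the statement is a pure consequence of local o-minimality together with the fact that the order is dense without endpoints, which ensures that the shrinking of intervals used in both the discreteness and closedness arguments can always be carried out.
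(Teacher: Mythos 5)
Your proof is correct. The paper itself gives no argument here — it simply cites \cite[Lemma 2.3]{Fuji4} — and your direct unpacking of the definition is exactly the expected one: with local o-minimality stated at \emph{every} point $a\in M$ (not just points of $X$), the hypothesis $\myint(X)=\emptyset$ forces each local trace $X\cap I$ to be finite, after which closedness and discreteness follow by shrinking intervals. Your side remark that definable completeness is not used is also accurate for this particular lemma, given the form of local o-minimality adopted in this paper.
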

\begin{proof}
See \cite[Lemma 2.3]{Fuji4}.
\end{proof}

\begin{lemma}\label{lem:property_d}
Consider a definably complete locally o-minimal structure $\mathcal M=(M,<,\ldots)$.
Let $X$ be a definable subset of $M^n$ and $\pi: M^n \rightarrow M^d$ be a coordinate projection such that the the fibers $X \cap \pi^{-1}(x)$ are discrete for all $x \in \pi(X)$.
Then, there exists a definable map $\tau:\pi(X) \rightarrow X$ such that $\pi(\tau(x))=x$ for all $x \in \pi(X)$.
\end{lemma}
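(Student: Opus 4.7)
The proof proceeds by induction on $k := n - d$, and we may assume after permuting coordinates that $\pi$ retains the first $d$ coordinates. The case $k = 0$ is trivial, with $\tau$ the identity on $X$. For $k = 1$ each fiber $F_x := X \cap \pi^{-1}(x)$ is a nonempty definable discrete subset of $M$, hence closed by Lemma \ref{lem:local}, since discreteness in the dense linear order $M$ forces empty interior. Fix any parameter $c \in M$. If $F_x \cap [c, \infty)$ is nonempty then by definable completeness it has an infimum in $M$, and this infimum lies in $F_x$ by closedness; otherwise $F_x \subseteq (-\infty, c)$ is bounded above and closed, so the analogous supremum works. Setting $\tau(x) = (x, \tau_0(x))$ for this selected element $\tau_0(x)$ yields a definable section.

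For $k \geq 2$ I would peel off one fiber coordinate at a time. Let $\pi': M^n \to M^{d+1}$ be the projection onto the first $d+1$ coordinates, set $Y := \pi'(X)$, and let $p: M^{d+1} \to M^d$ be the subsequent projection, so that $\pi = p \circ \pi'$. The fiber of $p|_Y$ over $x \in \pi(X)$ is precisely the image of $F_x \subseteq M^k$ under projection onto the first of the $k$ fiber coordinates. Granting that this image is discrete in $M$, the base case applied to $Y$ and $p$ produces a definable section $\sigma: \pi(X) \to Y$. Pulling back, the definable set
\[
X' := \{(x, y_2, \ldots, y_k) \in M^{n-1} : (\sigma(x), y_2, \ldots, y_k) \in X\}
\]
has fibers over $M^d$ that are slices of the original $F_x$ with first coordinate fixed, hence discrete subsets of $M^{k-1}$. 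The inductive hypothesis applied to $X'$ furnishes a section $\tau'$, and combining $\tau'$ with $\sigma$ assembles the desired $\tau$.

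\textbf{Main obstacle.} The crux is the auxiliary claim that a single coordinate projection of a definable discrete subset of $M^k$ remains discrete in $M$. Projections do not preserve discreteness in a general topological setting, so one must exploit the definably complete locally o-minimal structure. I would try two routes. The direct route assumes by contradiction that the projection has nonempty interior (using Lemma \ref{lem:local} to equate discreteness with empty interior for definable subsets of $M$), applies the base case over an interval in that interior to construct a definable graph in $F_x$, and invokes monotonicity-type theorems for definable functions in locally o-minimal structures to extract a continuous nonconstant piece of $F_x$, contradicting its discreteness. The indirect route appeals to the dimension theory developed in \cite{Fuji4} and \cite{FKK} and recalled in the introduction, from which definable projections do not increase dimension and $0$-dimensional definable subsets of $M$ are exactly the discrete ones.
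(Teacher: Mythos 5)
The paper offers no proof of this lemma at all; it simply cites \cite{FKK}, Proposition 2.6. Your argument is therefore necessarily a different route, and it is essentially correct. The induction on the number $k=n-d$ of fiber coordinates is sound: for $k=1$ each fiber is a nonempty definable discrete subset of $M$, hence has empty interior and is closed by Lemma \ref{lem:local}, so the infimum of $F_x\cap[c,\infty)$ (or the supremum of $F_x$ when that intersection is empty) exists in $M$ by definable completeness, is attained by closedness, and is uniformly definable in $x$; the inductive step correctly reduces to slices of the fibers, which are discrete as subspaces of discrete sets. The one load-bearing point is the auxiliary claim that a coordinate projection of a nonempty definable discrete set is again discrete, and you rightly flag it. Your ``direct route'' via monotonicity is too vague to count as a proof, but the indirect route is exactly what the paper licenses: Section 2 explicitly records that this projection fact holds in every definably complete locally o-minimal structure (\cite{FKK}, Theorem 2.5), and it also follows formally from Proposition \ref{prop:prelim_dim}(1),(3). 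I would cite \cite{FKK}, Theorem 2.5 rather than Proposition \ref{prop:prelim_dim}, since the latter is \cite{FKK}, Proposition 2.8 and may itself rest on the very statement you are proving (\cite{FKK}, Proposition 2.6); Theorem 2.5 precedes it and avoids any risk of circularity. With that reference in place, your proof is complete and self-contained in a way the paper's is not.
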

\begin{proof}
See \cite[Proposition 2.6]{FKK}.
\end{proof}

\begin{definition}[Local monotonicity]
A function $f$ defined on an open interval $I$ is \textit{locally constant} if, for any $x \in I$, there exists an open interval $J$ such that $x \in J \subseteq I$ and the restriction $f|_J$ of $f$ to $J$ is constant.
A function $f$ defined on an open interval $I$ is \textit{locally strictly increasing} if, for any $x \in I$, there exists an open interval $J$ such that $x \in J \subseteq I$ and $f$ is strictly increasing on the interval $J$.
We define a \textit{locally strictly decreasing} function similarly. 
\end{definition}

\begin{proposition}[Strong local monotonicity]\label{prop:monotonicity}
Consider a definably complete locally o-minimal structure $\mathcal{M}=(M,<,\ldots)$.
Let  $I$ be an interval and $f:I \to M$ be a definable function.
Then, there exists a mutually disjoint definable partition $I=X_d \cup X_c \cup X_+ \cup X_-$ satisfying the following conditions:
\begin{enumerate}
\item[(1)] the definable set $X_d$ is discrete and closed;
\item[(2)] the definable set $X_c$ is open and $f$ is locally constant on $X_c$;
\item[(3)] the definable set $X_+$ is open and $f$ is locally strictly increasing and continuous on $X_+$;
\item[(4)] the definable set $X_-$ is open and $f$ is locally strictly decreasing and continuous on $X_-$.
\end{enumerate}
\end{proposition}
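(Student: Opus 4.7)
The plan is to define $X_c$, $X_+$, $X_-$ directly by the local properties required and absorb the rest into $X_d$: $X_c$ (resp.\ $X_+$, $X_-$) is the set of $x \in I$ admitting an open neighborhood in $I$ on which $f$ is constant (resp.\ strictly increasing and continuous, strictly decreasing and continuous), and $X_d := I \setminus (X_c \cup X_+ \cup X_-)$. These four sets are definable; $X_c$, $X_+$, $X_-$ are open by construction, and no two of the three behaviors can coexist on one nondegenerate open interval, so they are pairwise disjoint. By Lemma~\ref{lem:local}, the remaining claim that $X_d$ is closed and discrete reduces to proving $\myint(X_d) = \emptyset$.

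Suppose for contradiction that some nonempty open interval $J \subseteq X_d$. For each $y \in J$, local o-minimality applied to the three definable level sets $\{t : f(t) < f(y)\}$, $\{t : f(t) = f(y)\}$, $\{t : f(t) > f(y)\}$ yields a common neighborhood of $y$ on which each set is a finite union of points and open intervals. Since the three sets partition this neighborhood, one may read off a germ descriptor $(\sigma_L(y), \sigma_R(y)) \in \{<, =, >\}^2$ recording which level set covers the initial segments immediately to the left and right of $y$. This induces a definable partition of $J$ into nine pieces, and a further application of local o-minimality lets us shrink $J$ to an open subinterval $J'$ on which the germ descriptor is constant.

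We now derive a contradiction in each of the nine cases. The descriptor $(=, =)$ places $J'$ in $X_c$. The four mixed descriptors (one coordinate $=$, the other $<$ or $>$) are ruled out by propagating local constancy: under $(=, >)$, constancy of $f$ on $(y - \delta_y, y)$ forces constancy on a right-neighborhood of every $z \in (y - \delta_y, y) \cap J'$, contradicting $\sigma_R(z) = \,>\,$. For $(<, >)$, a supremum argument using definable completeness shows $f$ is strictly increasing on $J'$: for $z_1 < z_2 \in J'$, the definable set $S = \{z \in [z_1, z_2) : f(z) \geq f(z_2)\}$ is empty, because otherwise $s_0 = \sup S$ would lie in $J'$ (as $\sigma_L(z_2) = \,<\,$ excludes a left-neighborhood of $z_2$ from $S$), and the cases $s_0 \in S$ (refuted via $\sigma_R(s_0) = \,>\,$, producing a larger element of $S$) and $s_0 \notin S$ (refuted via $\sigma_L(s_0) = \,<\,$, excluding $s_0$ as a limit of $S$) each yield a contradiction. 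Continuity then holds on $J'$ outside a discrete set of jump points, because an accumulation of jumps would force an accumulation of gaps in the definable image $f(J') \subseteq M$, violating local o-minimality in $M$. Hence any continuity point of $f$ in $J'$ lies in $X_+$, contradicting $J' \subseteq X_d$; the descriptor $(>, <)$ is symmetric.

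The main obstacle is the extremum descriptors $(<, <)$ and $(>, >)$, where every point of $J'$ is a strict local maximum (resp.\ minimum) of $f$. For $(<, <)$, write $J' = (a, b)$ and consider, for each $y \in J'$, the definable set $E(y) = \{t \in (y, b) : f(t) \geq f(y)\}$. If there is an open subinterval of $J'$ on which $E(y) = \emptyset$, then $f$ is strictly decreasing there, so that subinterval meets $X_-$ by the previous paragraph, a contradiction. Otherwise, on an open subinterval $J'' \subseteq J'$, the definable function $s(y) := \inf E(y) \in (y, b)$ is everywhere defined; the germ descriptor $\sigma_L(s(y)) = \,<\,$ combined with $f(t) < f(y)$ on $(y, s(y))$ forces $f(s(y)) \geq f(y)$, and then $\sigma_R(s(y)) = \,<\,$ together with the defining property of $s(y)$ propagates these relations to contradictory strict inequalities on $f$ in a neighborhood of $s(y)$. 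With every descriptor ruled out, $\myint(X_d) = \emptyset$, and Lemma~\ref{lem:local} completes the proof.
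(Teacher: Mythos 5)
The paper itself gives no proof of this proposition; it simply cites \cite[Theorem 2.3]{FKK}, so your from-scratch argument is necessarily a different route. Your overall strategy is sound and standard: define $X_c$, $X_+$, $X_-$ by the required local behaviour, reduce to showing $\myint(X_d)=\emptyset$ via Lemma \ref{lem:local}, introduce the left/right germ descriptor $(\sigma_L,\sigma_R)$, stabilize it on a subinterval $J'$, and rule out each descriptor. The cases $({=},{=})$, the four mixed cases, and $({<},{>})$, $({>},{<})$ are handled correctly (the supremum argument for strict monotonicity and the ``accumulating gaps in $f(J')$'' argument for continuity off a discrete set both work, though the latter deserves a sentence saying the gaps accumulate at the one-sided limit $\sup_{t<y_0}f(t)\in M$, which is where local o-minimality is violated).

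The gap is in the extremum descriptors $({<},{<})$ and $({>},{>})$, which are the crux of any monotonicity theorem. You correctly derive that $s(y)=\min E(y)$ exists, that $f(s(y))\geq f(y)$, and that $f<f(y)$ on $(y,s(y))$. But the asserted finish --- that $\sigma_R(s(y))={<}$ together with the defining property of $s(y)$ ``propagates these relations to contradictory strict inequalities in a neighborhood of $s(y)$'' --- does not produce a contradiction. The facts in hand at the point $c=s(y)$ are: $f(c)\geq f(y)$, $f<f(y)\leq f(c)$ on $(y,c)$, and $f<f(c)$ on a punctured neighborhood of $c$ on both sides. These are mutually consistent (they describe a spike at $c$), and iterating $s$ only produces an increasing chain of such record points, never a contradiction in finitely many steps. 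Ruling out ``every point of an interval is a strict local maximum'' genuinely requires a further idea --- for instance, a dichotomy on whether $s$ is eventually constant on subintervals $(y_1,s(y_1))$ (which forces $f$ strictly decreasing there and reduces to the monotone case) versus the remaining configuration, which needs its own definable-completeness argument. As written, the central case of the proposition is asserted rather than proved.
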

\begin{proof}
See \cite[Theorem 2.3]{FKK}.
\end{proof}

In \cite{Fuji4}, the author demonstrated that 
\begin{itemize}
\item the family of sets definable in a definably complete o-minimal structure enjoys good dimension theory and 
\item any definable set is decomposed into good-shaped definable sets called quasi-special submanifolds  
\end{itemize}
under the assumption that the image of a nonempty definable discrete set under a coordinate projection is again discrete.
The author and his collaborators demonstrated that the assumption holds true for any definably complete locally o-minimal structures \cite[Theorem 2.5]{FKK}.
The above two results in \cite{Fuji4} are now available in definably complete locally o-minimal structures without the assumption.
We use this fact without notice in this paper.
We summarize the results of \cite{Fuji4} and \cite{FKK} below.

\begin{definition}[Dimension, \cite{Fuji4}]\label{def:dim}
Consider an expansion of a dense linear order without endpoints $\mathcal M=(M,<,\ldots)$.
Let $X$ be a nonempty definable subset of $M^n$.
The dimension of $X$ is the maximal nonnegative integer $d$ such that $\pi(X)$ has a nonempty interior for some coordinate projection $\pi:M^n \rightarrow M^d$.
We set $\dim(X)=-\infty$ when $X$ is an empty set.
\end{definition}

\begin{proposition}\label{prop:prelim_dim}
Consider a definably complete locally o-minimal structure $\mathcal M=(M,<,\ldots)$.
The following assertions hold true.
\begin{enumerate}
\item[(1)] A definable set is of dimension zero if and only if it is discrete.
When it is of dimension zero, it is also closed.
\item[(2)] Let $X$ and $Y$ be definable subsets of $M^n$.
We have 
\begin{align*}
\dim(X \cup Y)=\max\{\dim(X),\dim(Y)\}\text{.}
\end{align*}
\item[(3)] Let $f:X \rightarrow M^n$ be a definable map. 
We have $\dim(f(X)) \leq \dim X$.
\item[(4)] Let $X$ be a definable set.
We have $\dim (\partial X) < \dim X$ and $\dim (\overline{X})=\dim X$.
\item[(5)] Let $\varphi:X \rightarrow Y$ be a definable surjective map whose fibers are equi-dimensional; that is, the dimensions of the fibers $\varphi^{-1}(y)$ are constant.
We have $\dim X = \dim Y + \dim \varphi^{-1}(y)$ for all $y \in Y$.  
\end{enumerate}
\end{proposition}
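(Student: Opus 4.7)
Since the paper presents this proposition as a compilation of results from \cite{Fuji4} and \cite{FKK}, the proof will in practice consist of citations. My plan is to indicate how each item follows from the tools already assembled in the excerpt: Lemma \ref{lem:local}, Lemma \ref{lem:property_d}, Proposition \ref{prop:monotonicity}, and the projection-preserves-discreteness result \cite[Theorem 2.5]{FKK}.

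For (1) I would induct on $n$. The base case $n=1$ is immediate from Lemma \ref{lem:local}, since a one-dimensional definable set has dimension zero exactly when it has empty interior. For $X \subseteq M^n$ with $\dim X = 0$, the projection $\pi_1(X) \subseteq M$ has empty interior and is thus discrete and closed, and for each $x \in \pi_1(X)$ the fiber $X \cap \pi_1^{-1}(x)$ has dimension zero as a definable subset of $M^{n-1}$, so by the inductive hypothesis it is discrete and closed. Assembling these fibers yields discreteness and closedness of $X$. Conversely, if $X$ is nonempty and discrete, \cite[Theorem 2.5]{FKK} forces every coordinate projection of $X$ to be discrete and hence to have empty interior, so $\dim X = 0$.

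For (2) the easy inequality is monotonicity; the reverse rests on a definable Baire-type principle in $M^d$, namely that a finite union of definable sets with empty interior has empty interior, which in turn follows inductively from Lemma \ref{lem:local}. For (3), I would pass to the graph $\Gamma(f) \subseteq M^k \times M^n$: Lemma \ref{lem:property_d} provides a definable section of the first projection, so $\dim \Gamma(f) = \dim X$, and $f(X)$ is then a further coordinate projection of $\Gamma(f)$, which cannot raise the dimension by the very definition.

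For (4), $\dim \overline{X} \geq \dim X$ is trivial from monotonicity. The matching upper bound, and the strict inequality $\dim \partial X < \dim X$, are the genuinely hard parts: I would deploy Proposition \ref{prop:monotonicity} together with the decomposition into quasi-special submanifolds developed in \cite{Fuji4} to analyse boundary behaviour coordinate by coordinate. For (5) I would induct on $\dim Y$: the base $\dim Y = 0$ reduces via (1) to a discretely indexed union of equi-dimensional fibres handled by (2), while the inductive step is carried out by producing a definable open subset of $Y$ over which $\varphi$ admits a cylindrical trivialisation. The principal obstacle is (5), together with the strict form of (4); both hinge on the structural decomposition of \cite{Fuji4}, which is where essentially all the technical labour resides.
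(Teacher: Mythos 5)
The paper's own proof of this proposition is only the citation to \cite[Proposition 2.8]{FKK}, so any self-contained argument is necessarily your own; your treatment of (1) is essentially the standard fibered induction and is sound, and your reduction of (4) to $\dim\partial X<\dim X$ plus the quasi-special decomposition is at least pointed in the right direction. Even in (2), though, the key step --- that a union of two definable subsets of $M^d$ with empty interior has empty interior --- does not ``follow inductively from Lemma \ref{lem:local}'' without real work: Lemma \ref{lem:local} only speaks about subsets of $M$, and the induction on $d$ requires a fiberwise argument using definable completeness (this is one of the lemmas actually proved in \cite{Fuji4}).

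There are two concrete gaps. First, in (3) the assertion $\dim\Gamma(f)=\dim X$ carries the entire content of the statement, and Lemma \ref{lem:property_d} does not deliver it: a definable section of the projection $\Gamma(f)\to X$ exists trivially (namely $x\mapsto(x,f(x))$), and the claim that a definable bijection between $X$ and $\Gamma(f)$ forces equality of dimensions is precisely the invariance of dimension under definable maps that (3) is meant to establish --- your argument is circular at exactly the hard inequality $\dim\Gamma(f)\le\dim X$. Second, the base case of (5) invokes (2) for a ``discretely indexed union of equi-dimensional fibres,'' but (2) is a statement about finite unions, whereas in a definably complete locally o-minimal structure a discrete definable set $Y$ is typically infinite --- this is the very phenomenon the paper is built around. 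The fact that a definable family of dimension-$\le e$ fibers over a discrete base has union of dimension $\le e$ is a separate theorem, not a consequence of (2). The inductive step is also shaky: a ``cylindrical trivialisation'' of $\varphi$ over a definable open subset of $Y$ is not known to exist here, and even if it were, discarding the complement of that open set loses a definable subset of $Y$ that may still have dimension $\dim Y$, so the induction does not close. Both (3) and (5) need the full machinery of \cite{Fuji4, FKK} (decomposition into quasi-special submanifolds together with the preservation of discreteness under coordinate projections), and the proposal as written does not supply substitutes for it.
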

\begin{proof}
See \cite[Proposition 2.8]{FKK}.
\end{proof}

\begin{definition}[\cite{Fuji4}]\label{def:quasi-special}
Consider an expansion of a densely linearly order without endpoints $\mathcal M=(M,<,\ldots)$.
Let $X$ be a definable subset of $M^n$ and $\pi:M^n \rightarrow M^d$ be a coordinate projection.
%
A definable subset is a \textit{$\pi$-quasi-special submanifold} or simply a \textit{quasi-special submanifold} if, $\pi(X)$ is a definable open set and, for every point $x \in \pi(X)$, there exists an open box $U$ in $M^d$ containing the point $x$ satisfying the following condition:
For any $y \in X \cap \pi^{-1}(x)$, there exist an open box $V$ in $M^n$ and a definable continuous map $\tau:U \rightarrow M^n$ such that $\pi(V)=U$, $\tau(U)=X \cap V$ and the composition $\pi \circ \tau$ is the identity map on $U$.

\end{definition}

\begin{proposition}\label{prop:quasi}
Let $\mathcal M=(M,<,\ldots)$ be a definably complete locally o-minimal structure and $A$ be a subset of $M$.
Let $X$ be a subset of $M^n$ definable over $A$.
There exists a family $\{C_i\}_{i=1}^N$ of mutually disjoint quasi-special submanifolds which are definable over $A$ such that $X= \bigcup_{i=1}^N C_i$.
\end{proposition}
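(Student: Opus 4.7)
The plan is induction on $d = \dim X$, using the dimension theory of Proposition \ref{prop:prelim_dim}, the section principle Lemma \ref{lem:property_d}, and strong local monotonicity Proposition \ref{prop:monotonicity}. The case $X = \emptyset$ is trivial. For $d = 0$, Proposition \ref{prop:prelim_dim}(1) asserts that $X$ is discrete and closed, so around each $y \in X$ one finds an open box $V \subseteq M^n$ with $X \cap V = \{y\}$; taking $\pi : M^n \to M^0$ the trivial projection and $\tau$ the constant map with value $y$ shows that $X$ is itself a single $\pi$-quasi-special submanifold (one can also subdivide into singletons if strict disjointness is desired).

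For the inductive step, let $X \subseteq M^n$ be definable over $A$ with $\dim X = d > 0$. For each of the finitely many coordinate projections $\pi : M^n \to M^d$, let $Y_\pi$ be the set of those $y \in X$ around which $X$ is locally the graph of a continuous definable section of $\pi$; this is definable over $A$, and extracting the part lying over $\myint(\pi(Y_\pi))$ yields a genuine $\pi$-quasi-special submanifold $Y'_\pi$. The residue $Y_\pi \setminus Y'_\pi$ projects into $\pi(Y_\pi) \setminus \myint(\pi(Y_\pi)) \subseteq \partial \pi(Y_\pi)$, which has dimension strictly less than $d$ by Proposition \ref{prop:prelim_dim}(4); since $\pi|_{Y_\pi}$ is locally injective, Proposition \ref{prop:prelim_dim}(3),(5) then force $\dim(Y_\pi \setminus Y'_\pi) < d$. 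Setting $Z = X \setminus \bigcup_{\pi} Y'_\pi$ and applying the inductive hypothesis to $Z$ (once $\dim Z < d$ is established) and to each $Y_\pi \setminus Y'_\pi$ yields a finite decomposition into quasi-special submanifolds definable over $A$; mutual disjointness is then arranged by successively subtracting previously chosen pieces and re-extracting the $\pi$-open part on the remainder, which preserves quasi-specialness.

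The crux, and main obstacle, is proving $\dim Z < d$. Fix a projection $\pi : M^n \to M^d$ for which $\pi(X)$ has nonempty interior. Stratifying $X$ by the dimension of the $\pi$-fiber and invoking the fiber-dimension formula Proposition \ref{prop:prelim_dim}(5), one sees that the fibers $X \cap \pi^{-1}(x)$ are discrete for $x$ in a definable subset of $\pi(X)$ whose complement has dimension strictly less than $d$; on its preimage in $X$, Lemma \ref{lem:property_d} supplies a definable section $\sigma$ of $\pi$. Iterating Proposition \ref{prop:monotonicity} in each coordinate of $\sigma$ subdivides the good locus into finitely many definable pieces on the interior of each of which $\sigma$ is continuous, showing that generic points of $X$ land in some $Y_\pi$. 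This multidimensional refinement of local monotonicity, carried out in \cite{Fuji4} and freed of its auxiliary hypothesis by \cite[Theorem 2.5]{FKK}, confines $Z$ to a set of dimension strictly less than $d$, closing the induction.
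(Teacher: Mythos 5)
The paper does not actually prove this proposition: it is quoted directly from the proof of \cite[Lemma 4.3]{Fuji4}, so your attempt has to be judged on its own merits rather than against an argument in the text. Your overall scheme (induction on $\dim X$, handling the discrete case via the trivial projection $M^n\to M^0$, extracting graph-like loci for each coordinate projection, and pushing the residue into lower dimension via Proposition \ref{prop:prelim_dim}(3),(4),(5)) is the right shape, and the base case and the dimension bookkeeping for $Y_\pi\setminus Y'_\pi$ are essentially fine.

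The genuine gap is the step where you assert that $Y'_\pi$, the locus of points of $X$ near which $X$ is locally the graph of a continuous section of $\pi$, restricted over $\myint(\pi(Y_\pi))$, ``yields a genuine $\pi$-quasi-special submanifold.'' Definition \ref{def:quasi-special} demands more than a pointwise local-graph condition: for each $x\in\pi(X)$ one must find a \emph{single} open box $U\ni x$ in $M^d$ such that \emph{every} point $y$ of the (possibly infinite, discrete) fiber $X\cap\pi^{-1}(x)$ sits in a box $V$ with $\pi(V)=U$ and $X\cap V$ a graph over all of $U$. If the local base boxes shrink as $y$ runs through the fiber (picture infinitely many branches over a base interval whose vertical spread grows relative to the gaps between them, as for the branches $y=n/x$ over $x\in(0,1)$), no common $U$ exists, and the set is everywhere locally a graph over $\pi$ without being $\pi$-quasi-special; such a set may still be quasi-special for a \emph{different} projection, which is exactly why the real proof has to choose the projection and subdivide much more carefully. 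This fiberwise uniformity is the actual content of \cite[Lemma 4.3]{Fuji4}, and your sketch passes over it. A secondary, smaller issue: making the pieces mutually disjoint by ``subtracting previously chosen pieces and re-extracting'' needs justification, since removing a definable subset from a quasi-special submanifold does not preserve quasi-specialness; the standard fix is to show the overlaps contribute only lower-dimensional residue and feed that back into the induction, which you gesture at but do not carry out.
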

\begin{proof}
It immediately follows from the proof of \cite[Lemma 4.3]{Fuji4}.
\end{proof}

\subsection{On pregeometry}
We recall the notion of a pregeometry.
It is found in \cite[Chapter 2]{Pillay} and \cite[Appendix C]{TZ}.

\begin{definition}[Pregeometry]\label{def:pregeometry}
Consider a set $S$ and a map $\mycl:\mathcal P(S) \rightarrow \mathcal P(S)$, where $\mathcal P(S)$ denotes the power set of $S$.
The pair $(S,\mycl)$ is a \textit{(combinatorial) pregeometry} if the following conditions are satisfied for any subset $A$ of $S$:
\begin{enumerate}
\item[(i)] $A \subseteq \mycl(A)$;
\item[(ii)] $\mycl(\mycl(A))=\mycl(A)$;
\item[(iii)] For any $a,b \in S$, if $a \in \mycl(A \cup \{b\}) \setminus \mycl(A)$, then $b \in \mycl(A \cup \{a\})$;
\item[(iv)] For any $a \in \mycl(A)$, there exists a finite subset $Y$ of $A$ such that $a \in \mycl(Y)$. 
\end{enumerate} 
The condition (iii) is called the \textit{exchange property}.

Consider a pregeometry $(S,\mycl)$.
Let $A$, $B$ and $C$ be subsets of $S$.
The set $A$ is $\mycl$-\textit{independent} over $B$ if, for any $a \in A$, we have $a \not\in \mycl((A \setminus \{a\}) \cup B)$.
A subset $A_0$ of $A$ is a $\mycl$-\textit{basis} for $A$ over $B$ if $A$ is contained in $\mycl(A_0 \cup B)$ and $A_0$ is $\mycl$-independent over $B$.
Each basis for $A$ over $B$ has the same cardinality and it is denoted by $\myrk^{\mycl}(A/B)$.
We simply denote it by $\myrk(A/B)$ when the closure operation $\mycl$ is clear from the context.
\end{definition}

The following result is well-known:

\begin{lemma}\label{lem:pregeometry}
Consider a pregeometry $(S,\mycl)$.
The symbols $A$, $B$, $C$ and $D$ denote arbitrary subsets of $S$.
The following assertions hold true.
\begin{enumerate}
\item[(i)] Two $\mycl$-bases for $A$ over $B$ have the same cardinality.
\item[(ii)] We have $\myrk(A/B) \leq \myrk(A/C)$ when $C$ is a subset of $B$.
\item[(iii)] When $B \subseteq C$ and $\myrk(A/B)=\myrk(A/C)$,
a $\mycl$-basis for $A$ over $B$ is a $\mycl$-basis for $A$ over $C$.
\end{enumerate}
\end{lemma}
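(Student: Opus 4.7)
The plan is to set up a Steinitz-type exchange lemma from the pregeometry axioms, deduce (i), and then derive (ii) and (iii) by extracting bases from spanning sets.

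For (i), I would first establish the following exchange fact: if $X \subseteq S$ is $\mycl$-independent over $B$, $a \in X$, and $y \in \mycl(X \cup B) \setminus \mycl((X \setminus \{a\}) \cup B)$, then $(X \setminus \{a\}) \cup \{y\}$ is $\mycl$-independent over $B$ and has the same $\mycl$-closure over $B$ as $X$.  This follows directly from the exchange property (iii) of Definition \ref{def:pregeometry}.  Given two $\mycl$-bases $A_0$ and $A_1$ for $A$ over $B$, in the case that $A_0$ is finite I would iterate this exchange to replace elements of $A_0$ by elements of $A_1$, obtaining $|A_1| \leq |A_0|$.  For the infinite case, I would invoke finite character (iv): each $a \in A_1$ satisfies $a \in \mycl(F_a \cup B)$ for some finite $F_a \subseteq A_0$, and independence of $A_0$ over $B$ together with $A_0 \subseteq A \subseteq \mycl(A_1 \cup B)$ forces $A_0 = \bigcup_{a \in A_1} F_a$, which yields $|A_0| \leq |A_1|$.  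Symmetry closes the argument.

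For (ii), let $A_0$ be a $\mycl$-basis for $A$ over $C$.  Since $C \subseteq B$, we have $A \subseteq \mycl(A_0 \cup C) \subseteq \mycl(A_0 \cup B)$, so $A_0$ spans $A$ over $B$.  Any spanning set contains a basis: take a maximal $\mycl$-independent subset $A_0' \subseteq A_0$ over $B$ (existence via Zorn's lemma together with finite character), and the exchange fact above shows $A_0'$ still spans $A$ over $B$.  Hence $\myrk(A/B) = |A_0'| \leq |A_0| = \myrk(A/C)$.  Part (iii) is obtained analogously: given a basis $A_0$ for $A$ over $B$, the inclusion $B \subseteq C$ implies $A_0$ spans $A$ over $C$, and extracting a basis $A_0' \subseteq A_0$ for $A$ over $C$ yields $|A_0'| = \myrk(A/C) = \myrk(A/B) = |A_0|$.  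In the setting of this paper the relevant $A$ sit inside $M^n$ and all bases are finite, so $A_0' = A_0$, making $A_0$ itself a $\mycl$-basis for $A$ over $C$.

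The main technical content is concentrated in (i): once the Steinitz exchange and the principle that every spanning set contains a basis are in place, parts (ii) and (iii) are immediate.
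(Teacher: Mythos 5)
Your argument is correct and is the standard textbook proof: Steinitz exchange plus finite character for (i), and extraction of a basis from a spanning set for (ii) and (iii). The paper itself offers no proof of this lemma --- it is simply recorded as well-known, with the cited references (Pillay's book and Tent--Ziegler, Appendix C) supplying exactly the argument you give --- so there is no competing approach to compare against. One remark worth retaining from your write-up: the finiteness caveat in (iii) is not cosmetic. For infinite rank the statement as literally written can fail (in a vector space take $B=\emptyset$, $C=\{v\}$ with $v\in A$ and $\dim A$ infinite; a basis $A_0$ of $A$ containing $v$ satisfies $\myrk(A/B)=\myrk(A/C)$ but is not $\mycl$-independent over $C$, since $v\in\mycl((A_0\setminus\{v\})\cup C)$). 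Your restriction to finite bases, which is all the paper ever uses since the sets $A$ in question are finite tuples from $M^n$, is therefore genuinely needed and closes the argument via $A_0'\subseteq A_0$ and $|A_0'|=|A_0|<\infty$.
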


\begin{definition}
Let $\mathcal L$ be a language and $\mathcal M=(M,\ldots)$ be an $\mathcal L$-structure.
Let $\mycl:\mathcal P(M) \rightarrow \mathcal P(M)$ be a closure operation such that the pair $(M,\mycl)$ is a pregeometry.
Consider subsets $A$ and $S$ of $M$ and $M^n$, respectively.
We define 
$$\myrk_{\mathcal M}^{\mycl}(S/A)=\max \{\myrk^{\mycl}(\{a_1,\ldots,a_n\}/A)\;|\; (a_1,\ldots, a_n) \in S\}.$$

Let $\mathcal L$ be a language and $T$ be its theory.
Consider a model $\mathcal M=(M,\ldots)$ of $T$ and a monster model $\mathbb M$ of $T$.
The universe of $\mathbb M$ is denoted by the same symbol $\mathbb M$ for simplicity.
Assume that there exists a closure operation $\mycl:\mathcal P(\mathbb M) \rightarrow \mathcal P(\mathbb M)$ such that the pair $(\mathbb M,\mycl)$ is a pregeometry. 
Let $A$ be a subset of $M$ and $S$ be a definable set.
We set $$\myrk_T^{\mycl} (S/A)=\myrk_{\mathbb M}^{\mycl}(S^{\mathbb M}/A).$$
We simply denote it by $\myrk(S/A)$ when $T$ and $\mycl$ are clear from the context.
%
\end{definition}

\section{Discrete closure and Pregeometry}\label{sec:pregeometry}
We are now ready to give the definition of discrete closure.

\begin{definition}[Discrete closure]\label{def:dcl}
Let $\mathcal L$ be a language containing a binary predicate $<$.
Consider a definably complete locally o-minimal $\mathcal L$-structure $\mathcal M=(M,<,\ldots)$.
The \textit{discrete closure} $\mydcl_{\mathcal M}(A)$ of a subset $A$ of $M$ is the set of points $x$ in $M$ having an $\mathcal L(A)$-formula $\phi(t)$ such that 
\begin{enumerate}
\item[(a)] the set $\phi(\mathcal M)$ contains the point $x$ and 
\item[(b)] it is discrete and closed.
\end{enumerate}
We often omit the subscript $\mathcal M$ of $\mydcl_{\mathcal M}(A)$.

Recall that a discrete, closed and definable subset of $M$ is a finite set when $\mathcal M$ is an o-minimal structure.
In this case, the discrete closure $\mydcl(A)$ of a subset $A$ of $M$ coincides with the algebraic closure $\myacl(A)$ and the definable closure $\mydclo(A)$.
\end{definition}

\begin{theorem}
Let $\mathcal L$ be a language containing the binary predicate $<$.
Consider a definably complete locally o-minimal $\mathcal L$-structure $\mathcal M=(M,<,\ldots)$.
The pair $(M,\mydcl)$ is a pregeometry.
\end{theorem}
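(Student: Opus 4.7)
The plan is to verify the four axioms of Definition \ref{def:pregeometry} separately. Reflexivity (i) follows because $t = a$ defines the discrete closed singleton $\{a\}$, and finite character (iv) follows because every $\mathcal L(A)$-formula mentions only finitely many parameters from $A$. The real work is in (ii) and (iii), and both rely on the observation that, by Lemma \ref{lem:local}, the property ``$\phi(\mathcal M, \overline y)$ has empty interior'' is expressible by the first-order condition
\begin{align*}
E_\phi(\overline y) \;:=\; \forall u \, \forall v \, \bigl( u < v \to \exists t \, (u < t < v \wedge \neg \phi(t, \overline y)) \bigr)\text{.}
\end{align*}
Whenever $\phi(t, \overline b)$ defines a discrete closed set for a specific parameter $\overline b$, I may replace $\phi(t, \overline y)$ by $\phi(t, \overline y) \wedge E_\phi(\overline y)$ so that every fiber is discrete, without affecting the fiber at $\overline b$.

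For (ii), take $a \in \mydcl(\mydcl(A))$ witnessed by $\phi_0(t, b_1, \dots, b_k)$ where $\phi_0$ is $\mathcal L(A)$ and each $b_i$ lies in a discrete closed $\mathcal L(A)$-definable set $\psi_i(\mathcal M)$. Form the $\mathcal L(A)$-formula
\begin{align*}
\Phi(t) \;:=\; \exists \overline y \, \Bigl( \phi_0(t, \overline y) \wedge \bigwedge_{i=1}^{k} \psi_i(y_i) \wedge E_{\phi_0}(\overline y) \Bigr)\text{.}
\end{align*}
Its defining graph $Y \subseteq M^{1+k}$ projects under the last $k$ coordinates into $\psi_1(\mathcal M) \times \cdots \times \psi_k(\mathcal M)$, which is zero-dimensional; the fibers of this projection are discrete by $E_{\phi_0}$ and Lemma \ref{lem:local}. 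Proposition \ref{prop:prelim_dim}(3),(5) thus yields $\dim \Phi(\mathcal M) \leq \dim Y = 0$, so $\Phi(\mathcal M)$ is discrete and closed by Proposition \ref{prop:prelim_dim}(1), and $a \in \Phi(\mathcal M)$ gives $a \in \mydcl(A)$.

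The exchange property (iii) is the main obstacle. Fix $a \in \mydcl(A \cup \{b\}) \setminus \mydcl(A)$ witnessed by an $\mathcal L(A)$-formula $\phi(t, s)$ with $\phi(\mathcal M, b)$ discrete and closed and $a \in \phi(\mathcal M, b)$. Replace $\phi$ by $\phi'(t, s) := \phi(t, s) \wedge E_\phi(s)$, so every fiber $\phi'(\mathcal M, s)$ is discrete while $\phi'(a, b)$ still holds. The goal is to show that the $\mathcal L(A \cup \{a\})$-definable set $\phi'(a, \mathcal M)$ is discrete and closed; this will witness $b \in \mydcl(A \cup \{a\})$.

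Assume for contradiction that $\phi'(a, \mathcal M)$ has nonempty interior, so $a$ lies in the $\mathcal L(A)$-definable set $\theta(\mathcal M) := \{ t : \neg E_{\phi'}(t) \}$. Set $X := \phi'(\mathcal M) \subseteq M^2$; since the fibers of $X$ over $s$ are discrete, Proposition \ref{prop:prelim_dim}(5) applied to the projection $X \to \pi_2(X)$ gives $\dim X \leq 1$. Let
\begin{align*}
Z \;:=\; \{ (t, s) \in X : s \in \myint(\phi'(t, \mathcal M)) \}\text{,}
\end{align*}
so $Z \subseteq X$ yields $\dim Z \leq 1$. The projection $\pi_1 : Z \to \theta(\mathcal M)$ is surjective with each fiber a nonempty open subset of $M$, hence equi-dimensional of dimension $1$, and Proposition \ref{prop:prelim_dim}(5) gives $\dim Z = \dim \theta(\mathcal M) + 1$. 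This forces $\dim \theta(\mathcal M) = 0$, so $\theta(\mathcal M)$ is discrete and closed by Proposition \ref{prop:prelim_dim}(1), witnessing $a \in \mydcl(A)$ and contradicting the hypothesis. The delicate point throughout is the first-order expressibility of ``has empty interior'', which Lemma \ref{lem:local} supplies.
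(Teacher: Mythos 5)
Your proof is correct, and while (i), (ii) and (iv) follow the same lines as the paper, your argument for the exchange property (iii) takes a genuinely different and considerably shorter route. The paper first splits into cases according to whether the set $T$ of parameters $y$ with $D_y$ discrete and closed contains an interval around $b$; in the interval case it invokes strong local monotonicity (Proposition \ref{prop:monotonicity}), decomposes $\Gamma$ into monotone graph pieces via the sets $U$, $V'$, $W$, borrows a step from the proof of \cite[Lemma 4.3]{Fuji4}, and only then analyses the fiber over $a$. You instead go straight for the set $\phi'(a,\mathcal M)$ and show it has empty interior by a pure dimension count: the locus $\theta(\mathcal M)$ of parameters $t$ whose fiber $\phi'(t,\mathcal M)$ has nonempty interior satisfies $\dim\theta(\mathcal M)+1=\dim Z\leq\dim X\leq 1$, the two fiber-dimension computations (discrete fibers over $s$ for $X$, open fibers over $t$ for $Z$) being supplied by Proposition \ref{prop:prelim_dim}(5); so $\theta(\mathcal M)$ is discrete, closed and $\mathcal L(A)$-definable, and $a\in\theta(\mathcal M)$ would contradict $a\notin\mydcl(A)$. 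This avoids monotonicity and the case analysis entirely, at the price of leaning on the full fiber-dimension formula as a black box --- which is legitimate here, since Proposition \ref{prop:prelim_dim} is already established in the preliminaries independently of this theorem. The only cosmetic issues are the overloading of the symbol $E$ (first for emptiness of interior of the $t$-fiber over $\overline y$, later for the $s$-fiber over $t$) and the unstated but easy facts that a discrete subset of a dense order has empty interior (needed to see that $\phi'(a,b)$ holds and that $E_{\phi_0}(\overline b)$ holds in (ii)) and that a finite product of discrete closed sets is discrete and closed; none of these affects correctness.
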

\begin{proof}
%
The conditions (i) and (iv) in Definition \ref{def:pregeometry} are obviously satisfied.

We show that that the condition (ii) in Definition \ref{def:pregeometry} is satisfied.
The inclusion $\mydcl(A) \subseteq \mydcl(\mydcl(A))$ is obvious by Definition \ref{def:pregeometry}(i).
We demonstrate the opposite inclusion.
Take $b \in \mydcl(\mydcl(A))$.
There exists a tuple $a=(a_1,\ldots, a_m)$ of elements in $\mydcl(A)$ and an $\mathcal L(A)$-formula $\phi(x,y_1,\ldots, y_m)$ such that $\mathcal M \models \phi(b,a_1,\ldots, a_m)$ and the definable set $\{x \in M\;|\; \mathcal M \models \phi(x,a_1,\ldots, a_m)\}$ is discrete and closed.
Since $a_i \in \mydcl(A)$, there exists an $\mathcal L(A)$-formula $\psi_i(x)$ such that $\mathcal M \models \psi_i(a_i)$ and the definable set $\{x \in M\;|\; \mathcal M \models \psi_i(x)\}$ is discrete and closed for any $1 \leq i \leq m$.
Consider the $\mathcal L(A)$-formula $\eta(y_1,\ldots, y_m)$ representing that the set $$E_{y_1,\ldots, y_m}:=\{x \in M\;|\; \mathcal M \models \phi(x,y_1,\ldots, y_m)\}$$
is neither empty nor contains an open interval.
By Lemma \ref{lem:local}, we have $\mathcal M \models \eta(y_1,\ldots, y_m)$ if and only if the definable set $E_{y_1,\ldots, y_m}$ is nonempty, discrete and closed.
Consider the definable sets 
\begin{align*}
&F=\{(y_1,\ldots, y_m) \in M^m\;|\; \mathcal M \models \bigwedge_{i=1}^m \psi_i(y_i) \wedge \eta(y_1,\ldots, y_m)\} \text{ and }\\
&E =\{(x,y_1,\ldots,y_m) \in M^{m+1}\;|\; \mathcal M \models \phi(x,y_1,\ldots, y_m)\ \text{ and } (y_1,\ldots, y_m) \in F\}.
\end{align*}
By the assumption, the set $F$ is discrete and closed.
The set $F$ is the image of $E$ under the coordinate projection forgetting the first coordinate and its fibers are discrete and closed by the definition of the formula $\eta$.
 By Proposition \ref{prop:prelim_dim}(1),(5), $E$ is also discrete and closed.
The projection image $G$ of $E$ defined by
$$G=\{x \in M\;|\; \mathcal M \models \exists y_1, \ldots, y_m\ (x,y_1,\ldots, y_m) \in E\}$$
is also discrete and closed by Proposition \ref{prop:prelim_dim}(1),(3).
The definable set $G$ is defined by an $\mathcal L(A)$-formula by its construction, and we have $b \in G$.
It implies that $b \in \mydcl(A)$.
We have demonstrated that the condition (ii) is satisfied.

The remaining task is to demonstrate the exchange property (iii).
Fix elements $a,b \in M$ and a subset $A \subseteq M$ with $a \in \mydcl(A \cup \{b\}) \setminus \mydcl(A)$.
We demonstrate that $b \in \mydcl(A \cup \{a\})$.
Take an $\mathcal L(A)$-formula $\varphi(x,y)$ such that $D_b$ is discrete and closed, and it contains the point $a$.
Here, the notation $D_y$ denotes the set $\{x \in M\;|\; \mathcal M \models \varphi(x,y)\}$ for any $y \in M$.
Consider the definable sets
\begin{align*}
& S= \{y \in M\;|\; \mathcal M \models \exists x\  \varphi(x,y)\}\text{ and }\\
& T=\{y \in S\;|\; \text{ the set } D_y \text{ is discrete and closed}\}.
\end{align*}
We get $b \in T$ by the assumption.

We first consider the case in which there exist no open intervals contained in $T$ and containing the point $b$.
By local o-minimality, the point $b$ is either an isolated point of $T$ or an endpoint of a maximal open interval contained in $T$.
Consider the set of isolated points of $T$ and endpoints of maximal open intervals contained in $T$.
It is obviously defined by an $\mathcal L(A)$-formula and contains the point $b$.
It does not contain an open interval by its definition.
Therefore, it is discrete and closed by Lemma \ref{lem:local}.
It implies that $b \in \mydcl(A) \subseteq \mydcl(A \cup \{a\})$.

We next treat the case in which there exists an open interval $I$ contained in $T$ and containing the point $b$.
We may assume that $S$ is open, $S=T$ and the sets $D_y$ are discrete and closed for all $y \in S$ by considering the $\mathcal L(A)$-formula $\varphi(x,y) \wedge (y \in \myint(T))$ instead of $\varphi(x,y)$.
Let $\pi_i:M^2 \rightarrow M$ be the coordinate projections onto the $i$-th coordinates for $i=1,2$.
Set 
\begin{align*}
&\Gamma = \{(y,x) \in M^2\;|\; \mathcal M \models \varphi(x,y)\},\\
& U=\{(y,x) \in \Gamma\;|\; \text{ there exists an open box }B \subseteq M^2 \text{ such that } (y,x) \in B \text{ and }\\
&\qquad B \cap \Gamma \text{ is the graph of a monotone definable continuous function}\\
&\qquad\text{defined on }\pi_1(B)\} \text{ and }\\
&V = \Gamma \setminus U.
\end{align*}
We prove that $\pi_1(V)$ is discrete and closed.
By Proposition \ref{prop:prelim_dim}(1), we have only to demonstrate that $\dim V \leq 0$.
We also consider the definable sets
\begin{align*}
& U'=\{(y,x) \in \Gamma\;|\; \text{ there exists an open box }B \subseteq M^2 \text{ such that } (y,x) \in B \text{ and }\\
&\qquad B \cap \Gamma \text{ is the graph of a (not necessarily monotone) definable}\\
&\qquad\text{continuous function defined on }\pi_1(B)\} \text{,}\\
&V' = \Gamma \setminus U' \text{ and }\\
& W= V \setminus V'.
\end{align*}
It is obvious that $V=V' \cup W$.
The inequality $\dim \pi_1(V) \leq 0$ is equivalent to the condition that $\dim \pi_1(V') \leq 0$ and $\dim \pi_1(W) \leq 0$ by Proposition \ref{prop:prelim_dim}(2).
We have already demonstrated that the projection image $\pi_1(V')$ has an empty interior in the course of the proof of \cite[Lemma 4.3]{Fuji4}.
The image is discrete and closed by Lemma \ref{lem:local}.
It means that $\dim \pi_1(V')\leq 0$.

The remaining task is to demonstrate the inequality $\dim \pi_1(W) \leq 0$.
Assume that $\dim \pi_1(W)=1$ for contradiction.
A nonempty interval $J$ is contained in $\pi_1(W)$.
We can find a definable map $f: J \rightarrow W$ such that the composition $\pi_1 \circ f$ is the identity map on $J$ by Lemma \ref{lem:property_d}.
By Proposition \ref{prop:monotonicity}, we may assume that $g:=\pi_2 \circ f:J \rightarrow M$ is monotone and continuous by shrinking the interval $J$ if necessary.
We prove that either one of the following conditions is satisfied by shrinking $J$ if necessary.
\begin{itemize}
\item For any $s \in J$, there are no $t \in M$ with $(s,t) \in \Gamma$ and $t<g(s)$;
\item There exists a definable continuous map $g_1:J \rightarrow M$ such that, for any $s \in J$, $(s,g_1(s)) \in \Gamma$, $g_1(s)<g(s)$ and there are no $t \in M$ with $(s,t) \in \Gamma$ and $g_1(s)<t<g(s)$.
\end{itemize}
Set $J'=\{s \in J\;|\; \nexists t,\ ((s,t) \in \Gamma) \wedge (t<g(s))\}$.
If $J'$ contains an open interval, the first condition is satisfied by replacing $J$ with the open interval contained in $J'$.
Otherwise, $J'$ is discrete and closed by Lemma \ref{lem:local}.
We may assume that $J'=\emptyset$ by shrinking $J$ if necessary.
Consider the definable function $g_1:J \rightarrow M$ given by $g_1(s)=\sup\{t \in M\;|\; (s,t) \in \Gamma \text{ and } t<g(s)\}$.
By Proposition \ref{prop:monotonicity}, we may assume that $g_1$ is continuous by shrinking $J$ once again.
The function $g_1$ obviously satisfies the second condition.

In the same manner, we can demonstrate that either one of  the following conditions is satisfied by shrinking $J$ if necessary.
We omit the proof.
\begin{itemize}
\item For any $s \in J$, there are no $t \in M$ with $(s,t) \in \Gamma$ and $t>g(s)$;
\item There exists a definable continuous map $g_2:J \rightarrow M$ such that, for any $s \in J$, $(s,g_2(s)) \in \Gamma$, $g_2(s)>g(s)$ and there are no $t \in M$ with $(s,t) \in \Gamma$ and $g(s)<t<g_2(s)$.
\end{itemize}
Under this circumstance, the graph of $g$ is contained in both $U$ and $W$, which is a contradiction to the fact that $U \cap W=\emptyset$. 
We have demonstrated that $\dim \pi_1(V) \leq 0$.

When $b \in \pi_1(V)$, we have $b \in \mydcl(A) \subseteq \mydcl(A \cup \{a\})$.
We consider the other case, that is, the case in which $b \not\in \pi_1(V)$.
By modifying the formula $\varphi$ appropriately, we may assume that $\Gamma=U$ without loss of generality.
Consider the set 
\begin{align*}
& X=\{(y,x) \in \Gamma\;|\; \text{ there exists an open box }B \subseteq M^2 \text{ such that } (y,x) \in B \text{ and }\\
&\qquad B \cap \Gamma \text{ is the graph of a constant function defined on }\pi_1(B)\}.
\end{align*}
It is defined by an $\mathcal L(A)$-formula.
If $\pi_2(X)$ is not discrete, we have $\dim \pi_2(X)=1$.
Therefore, we have $\dim \Gamma =2$ by Proposition \ref{prop:prelim_dim}(5).
On the other hand, we have $\dim \Gamma \leq 1$ by the same proposition because $\dim \pi_1(\Gamma) \leq 1$ and $\dim (\Gamma \cap \pi_1^{-1}(y))=0$ for all $y \in \pi_1(\Gamma)$.
A contradiction.
It implies that $\pi_2(X)$ is discrete and closed.

If the point $a$ is contained in $\pi_2(X)$, we have $a \in \mydcl(A)$, which is a contradiction to the assumption. 
We obtain $a \not\in \pi_2(X)$.
Replacing $\Gamma$ with $\Gamma \setminus X$, we may assume that, for any $(y,x) \in \Gamma$, there exists an open box $B$ containing the point $(y,x)$ such that $B \cap \Gamma$ is the graph of a strictly monotone continuous definable function defined on $\pi_1(B)$.
The set $\pi_1(\Gamma \cap \pi_2^{-1}(a))$ is defined by an $\mathcal L(A \cup \{a\})$-formula. 
By the above assumption, it does not contain an open interval.
It is discrete and closed by Lemma \ref{lem:local}.
On the other hand, we have $(b,a) \in \Gamma$.
It implies that $b \in \pi_1(\Gamma \cap \pi_2^{-1}(a))$.
We finally get $b \in \mydcl(A \cup \{a\})$.
\end{proof}

\begin{lemma}\label{lem:infetesimal}
Let $\mathcal L$ be a language containing a binary predicate $<$ and $T$ be a definably complete locally o-minimal $\mathcal L$-theory.
Let $\mathcal M=(M,<,\ldots)$ be a model of $T$.
Let $(a_1,\ldots, a_n)$ be a sequence of $M$.
There exist a sequence of elementary extensions $$\mathcal M_0=\mathcal M \preceq \mathcal M_1 \preceq \cdots \preceq \mathcal M_n$$ and elements $b_i \in M_i$ for all $1 \leq i \leq n$ such that $b_i>a_i$ and the inequalities $t > b_i$ hold true for all $t \in M_{i-1}$ with $t>a_i$.
Here, $M_i$ denotes the universe of $\mathcal M_i$ for all $1 \leq i \leq n$.
\end{lemma}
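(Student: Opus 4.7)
The plan is to build the chain $\mathcal M_0 \preceq \mathcal M_1 \preceq \cdots \preceq \mathcal M_n$ and the elements $b_i$ by induction on $i$, using a routine compactness argument at each step to adjoin an element $b_i$ that realizes the cut ``just above $a_i$'' with respect to $\mathcal M_{i-1}$. The base case is $\mathcal M_0 := \mathcal M$, and the assertion is vacuous for $n=0$.

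For the inductive step, assume $\mathcal M_{i-1}$ has been constructed. Consider the partial $\mathcal L(M_{i-1})$-type
\[
p_i(x) = \{x > a_i\} \cup \{x < t : t \in M_{i-1},\ t > a_i\}.
\]
Any realization $b_i$ of $p_i$ in an elementary extension of $\mathcal M_{i-1}$ will by definition satisfy the conclusion of the lemma at stage $i$. So it suffices to produce an elementary extension $\mathcal M_i \succeq \mathcal M_{i-1}$ in which $p_i$ is realized; this is exactly what compactness gives once finite satisfiability in $\mathcal M_{i-1}$ is checked.

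To verify finite satisfiability, fix finitely many $t_1, \ldots, t_k \in M_{i-1}$ with each $t_j > a_i$ and let $s = \min_{1 \leq j \leq k} t_j$, so $s > a_i$ in $\mathcal M_{i-1}$. Since $T$ is a definably complete locally o-minimal theory, definably completeness and local o-minimality are preserved under elementary equivalence (as recalled in the paper), so $\mathcal M_{i-1}$ is in particular an expansion of a dense linear order without endpoints. Hence there exists $b \in M_{i-1}$ with $a_i < b < s$, and this $b$ satisfies the chosen finite portion of $p_i$. By compactness applied to the elementary diagram of $\mathcal M_{i-1}$ together with a new constant symbol $c$ and the axioms of $p_i(c)$, we obtain $\mathcal M_i \succeq \mathcal M_{i-1}$ and $b_i \in M_i$ realizing $p_i$, completing the induction.

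There is no genuine obstacle here. The only point that requires mild care is bookkeeping of parameters: the type $p_i$ must use only parameters from $M_{i-1}$ (not from $M_i$), since it is elements of $M_{i-1}$ lying above $a_i$ that $b_i$ must undercut, while $b_i$ itself is allowed, and indeed forced, to live in a strictly larger structure $\mathcal M_i$.
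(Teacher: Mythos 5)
Your proof is correct and follows essentially the same route as the paper: both realize, in an elementary extension of $\mathcal M_{i-1}$, the cut lying immediately above $a_i$, and reduce to one step by induction. The only (harmless) difference is that the paper packages this cut as the complete $1$-type $a_i^+$ of formulas holding on some interval $(a_i,s)$, using local o-minimality to see it is complete, whereas you realize just the partial type $\{x>a_i\}\cup\{x<t : t\in M_{i-1},\ t>a_i\}$, whose finite satisfiability needs only density of the order; this weaker type already suffices for the stated conclusion.
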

\begin{proof}
We can easily reduce to the case in which $n=1$ by induction on $n$.
Therefore, we only consider the case in which $n=1$.
Consider the set $a_1^+$ of $\mathcal L(M_0)$-formulas $\Phi(x)$ with a single free variable $x$ satisfying $$\mathcal M \models \exists s\ s>a_1 \wedge (\forall t\  a_1<t<s \rightarrow \Phi(t)).$$
By local o-minimality, it is a complete $1$-type.
There exist an elementary extension $\mathcal M_1=(M_1,\ldots)$ and an element $b_1 \in M_1$ realizing the $1$-type $a_1^+$.
Since $\mlq\mlq\ x>a_1\ \mrq\mrq \in a_1^+$, we have $a_1<b_1$.
For any $t \in M_0$ with $t>a_1$, we also have $\mlq\mlq\ x<t\ \mrq\mrq  \in a_1^+$.
It implies that $b_1<t$.
\end{proof}

\begin{lemma}\label{lem:infetesimal2}
Let $\mathcal L$ be a language containing a binary predicate $<$ and $T$ be a definably complete locally o-minimal $\mathcal L$-theory.
Let $\mathcal M=(M,<,\ldots)$ be a model of $T$ and $\mathcal N=(N,<,\ldots)$ be its elementary extension.
Let $a \in M$ and $b \in N$ be points such that $a<b$ and, $t>b$ for all $t \in M$ with $t>a$.
Then we have $b \not\in \mydcl_{\mathcal N}(M)$.
\end{lemma}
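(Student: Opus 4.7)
My plan is to argue by contradiction. Suppose $b \in \mydcl_{\mathcal N}(M)$, so there is an $\mathcal L(M)$-formula $\phi(x)$ with $\mathcal N \models \phi(b)$ and $\phi(\mathcal N)$ discrete and closed in $N$. The first step is to transfer this property down to $\mathcal M$: by Lemma \ref{lem:local}, being discrete and closed is equivalent to having empty interior, a condition expressible by the $\mathcal L$-sentence $\neg \exists u \exists v\, (u<v \wedge \forall z\, (u<z<v \to \phi(z)))$. Elementarity then gives that $\phi(\mathcal M)$ is discrete and closed as well.

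Next, I would locate $\phi(\mathcal M)$ relative to $a$ using definable completeness. Set $s := \inf(\phi(\mathcal M) \cap (a, +\infty))$ in $M \cup \{+\infty\}$. The crux is to show $s > a$. If $s = +\infty$, then $\mathcal M \models \forall x\, (x > a \to \neg \phi(x))$, and elementarity transfers this to $\mathcal N$, contradicting $\mathcal N \models \phi(b)$ since $b > a$. If $s = a$, then $a$ is an accumulation point of $\phi(\mathcal M)$ from above; closedness of $\phi(\mathcal M)$ puts $a$ into $\phi(\mathcal M)$, while discreteness makes $a$ isolated, so there is an isolating open interval $(c,d)$ with $\phi(\mathcal M) \cap (c,d) = \{a\}$. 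Necessarily $d \in M$ (else $\phi(\mathcal M) \cap (a,+\infty) = \emptyset$ and $s = +\infty$), which yields $s \geq d > a$, a contradiction. Hence $s \in M$ with $s > a$.

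Finally, from the definition of infimum, $\mathcal M$ satisfies the $\mathcal L(M)$-sentence $\forall x\, ((a < x \wedge x < s) \to \neg \phi(x))$, which by elementarity also holds in $\mathcal N$. Since $s \in M$ and $s > a$, the hypothesis on $b$ forces $b < s$, and combined with $a < b$ we obtain $\mathcal N \models \neg \phi(b)$, the desired contradiction. The main point to get right is the Lemma \ref{lem:local} characterization of discreteness, which is what allows the discrete and closed property of a definable set to pass between $\mathcal M$ and $\mathcal N$ via a single first-order sentence; once that transfer is in hand, the remaining argument is a clean application of definable completeness to produce the witness $s$ that traps $b$ in the cut just above $a$.
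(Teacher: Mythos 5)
Your proof is correct and follows essentially the same strategy as the paper: transfer the discrete-and-closed property of $\phi(\mathcal N)$ down to $\phi(\mathcal M)$, produce a point of $M$ strictly above $a$ such that the intervening interval misses $\phi(\mathcal M)$, and observe that $b$ is trapped in that interval in $\mathcal N$. The only difference is cosmetic: the paper obtains the gap endpoint directly from local o-minimality at $a$, whereas you construct it as an infimum via definable completeness; your version also spells out the transfer of discreteness between $\mathcal N$ and $\mathcal M$ that the paper leaves implicit.
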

\begin{proof}
Assume that $b \in \mydcl_{\mathcal N}(M)$ for contradiction.
There exists a definable closed discrete subset $X$ of $M$ such that $b \in X^{\mathcal N}$.
By local o-minimality, there exists an element $c \in M$ such that $c>a$ and the open interval $(a,c)$ does not intersect with $X$.
In particular, for any $x \in (a,c)^{\mathcal N}$, we have $x \not\in X^{\mathcal N}$.
It contradicts the condition that $b \in X^{\mathcal N}$ because the inequalities $a<b<c$ hold true by the assumption.
\end{proof}

\begin{theorem}\label{thm:rank}
Let $\mathcal L$ be a language containing a binary predicate $<$ and $T$ be a definably complete locally o-minimal $\mathcal L$-theory.
Let $\mathcal M=(M,\ldots)$ be a model of $T$.
Let $A$ be a subset of $M$ and $X$ be a subset of $M^n$ definable over $A$.
We have $\dim X=\myrk_T^{\mydcl}(X/A)$.
\end{theorem}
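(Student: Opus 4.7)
The plan is to prove the two inequalities $\myrk_T^{\mydcl}(X/A) \leq \dim X$ and $\dim X \leq \myrk_T^{\mydcl}(X/A)$ separately, working inside the monster model $\mathbb M$. A preliminary point used throughout is that $\mydcl$ transfers cleanly between models: a formula $\phi(t)$ over $M$ defines a discrete and closed subset of a model iff the $\phi$-defined set contains no open interval, which is first-order by Lemma \ref{lem:local}; hence $\mydcl_{\mathcal M}(B) = \mydcl_{\mathbb M}(B) \cap M$ whenever $\mathcal M \preceq \mathbb M$ and $B \subseteq M$, and similarly for intermediate extensions.

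For the upper bound $\myrk \leq \dim$, I would decompose $X = \bigcup_{j=1}^N C_j$ using Proposition \ref{prop:quasi} into $\pi_j$-quasi-special submanifolds definable over $A$, with $\pi_j$ projecting onto some set $I_j$ of $d_j = \dim C_j \leq \dim X$ coordinates. Given $\bar a = (a_1,\ldots,a_n) \in X^{\mathbb M}$, pick $j$ with $\bar a \in C_j^{\mathbb M}$. The fiber $C_j^{\mathbb M} \cap \pi_j^{-1}((a_i)_{i \in I_j})$ is discrete (hence of dimension $0$), since the quasi-special fiber condition is first-order and transfers from $\mathcal M$. For each $k \notin I_j$, the projection of this fiber to the $k$-th coordinate is, by Proposition \ref{prop:prelim_dim}(1),(3), a discrete and closed set definable over $A \cup \{a_i\}_{i \in I_j}$ and containing $a_k$. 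Hence $a_k \in \mydcl(A \cup \{a_i\}_{i \in I_j})$, so $\{a_i\}_{i \in I_j}$ spans $\{a_1,\ldots,a_n\}$ over $A$, giving $\myrk(\{a_1,\ldots,a_n\}/A) \leq d_j \leq \dim X$; taking the maximum over $\bar a$ yields the bound.

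For the lower bound $\dim \leq \myrk$, set $d = \dim X$ and choose, after reordering, a projection $\pi:M^n \to M^d$ with an open box $\prod_{i=1}^d (c_i, d_i) \subseteq \pi(X)$ whose endpoints lie in $M$. Pick any $e_i \in M$ with $c_i < e_i < d_i$ and apply Lemma \ref{lem:infetesimal} to the sequence $(e_1,\ldots,e_d)$ to obtain a tower $\mathcal M = \mathcal M_0 \preceq \cdots \preceq \mathcal M_d$ and elements $b_i \in M_i$ with $b_i > e_i$ and $b_i < t$ for every $t \in M_{i-1}$ satisfying $t > e_i$. Then $b_i \in (c_i, d_i)^{M_i}$, since $d_i \in M_{i-1}$ lies above $e_i$; by Lemma \ref{lem:infetesimal2}, $b_i \notin \mydcl_{M_i}(M_{i-1})$, and since $A \cup \{b_1,\ldots,b_{i-1}\} \subseteq M_{i-1}$ this yields $b_i \notin \mydcl(A \cup \{b_1,\ldots,b_{i-1}\})$. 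Because in a pregeometry independence of a finite set can be verified by checking its elements one at a time in some order, $\{b_1,\ldots,b_d\}$ is $\mydcl$-independent over $A$ in $\mathcal M_d$. Embedding $\mathcal M_d$ elementarily into $\mathbb M$ over $M$, the tuple $(b_1,\ldots,b_d) \in \pi(X)^{\mathbb M}$ extends to some $(b_1,\ldots,b_n) \in X^{\mathbb M}$ of rank at least $d$, so $\myrk_T^{\mydcl}(X/A) \geq d$.

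The main subtleties I anticipate are twofold: first, verifying that the $\mydcl$-properties used pass cleanly among the intermediate extensions $\mathcal M_i$, the original $\mathcal M$, and the monster $\mathbb M$, all of which reduces to the elementary characterization of discrete-closed subsets in Lemma \ref{lem:local}; and second, organizing the lower-bound tower so that each new infinitesimal $b_i$ lands inside the prescribed coordinate interval while simultaneously avoiding $\mydcl(A \cup \{b_1,\ldots,b_{i-1}\})$. Lemmas \ref{lem:infetesimal} and \ref{lem:infetesimal2} are tailored to deliver exactly these two features in tandem, so the remaining work is careful bookkeeping of which elements live in which extension.
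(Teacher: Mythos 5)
Your proposal is correct and follows essentially the same route as the paper: the upper bound via the quasi-special decomposition of Proposition \ref{prop:quasi} and discreteness of fibers, and the lower bound via the tower of infinitesimals from Lemmas \ref{lem:infetesimal} and \ref{lem:infetesimal2}. The only (harmless) deviations are that you run the upper bound over all pieces at once rather than reducing to a single quasi-special submanifold, and you lift $(b_1,\ldots,b_d)$ into $X^{\mathbb M}$ by realizing the existential formula defining $\pi(X)$ instead of using the local graph map $f$ as the paper does.
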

\begin{proof}
Let $\mathbb M$ be a monster model of $T$.
We use the same notation for its universe.
We first reduce the theorem to a simple case.
There exists a family $\{C_i\}_{i=1}^N$ of mutually disjoint quasi-special submanifolds which are definable over $A$ such that $X= \bigcup_{i=1}^N C_i$ by Proposition \ref{prop:quasi}.
We obviously have $\myrk(X/A)=\max\{\myrk(C_i/A)\;|\; 1 \leq i \leq N\}$ by the definition of $\myrk$.
On the other hand, we also have $\dim X = \max \{\dim C_i\;|\; 1 \leq i \leq N\}$ by Proposition \ref{prop:prelim_dim}(2).
Hence, we may assume that $X$ is $\pi$-quasi-special manifold without loss of generality, where $d=\dim X$ and $\pi:M^n \rightarrow M^d$ is a coordinate projection.
Permuting the coordinates if necessary, we may assume that $\pi$ is the projection onto the first $d$ coordinates.

We show that $\myrk(X/A) \leq \dim X$.
We have nothing to prove when $d=\dim X=n$.
We consider the other case.
Take an arbitrary point $(a_1,\ldots, a_n) \in X^{\mathbb M}$.
We have only to demonstrate that $a_k \in  \mydcl(\{a_1, \ldots, a_d\} \cup A)$ for any $d<k \leq n$.
Fix $d<k \leq n$.
For any $t \in \mathbb M^d$, the fiber $X_t^{\mathbb M} =\{ y \in \mathbb M^{n-d}\;|\; (t,y) \in X^{\mathbb M}\}$ is discrete and closed because $X$ is a $\pi$-quasi-special submanifold.
Let $\pi_1:M^n \rightarrow M^{d+1}$ be the coordinate projection given by $\pi_1(x_1,\ldots,x_n)=(x_1,\ldots, x_d,x_k)$.
The projection image of a discrete closed definable set is again discrete and closed by Proposition \ref{prop:prelim_dim}(1),(3).
Therefore, the fiber $\pi_1(X^{\mathbb M})_{(a_1,\ldots, a_d)}=\{y \in \mathbb M\;|\; (a_1,\ldots, a_d,y) \in \pi_1(X^{\mathbb M})\}$ is discrete and closed.
The point $a_k$ is obviously contained in $\pi_1(X^{\mathbb M})_{(a_1,\ldots, a_d)}$.
They imply that $a_k \in  \mydcl(\{a_1, \ldots, a_d\} \cup A)$.
We have obtained the inequality $\myrk(X/A) \leq d$.

We next prove the opposite inequality.
Take $a=(a_1,\ldots, a_n) \in X$.
The intersection $V \cap X$ is the graph of a definable continuous map $f=(f_1,\ldots,f_{n-d}):\pi(V) \rightarrow M^{n-d}$ for some open box $V$ containing the point $a$ because $X$ is a $\pi$-quasi-special manifold.
Apply Lemma \ref{lem:infetesimal} to the sequence $(a_1,\ldots, a_d)$.
We get elementary extensions $\mathcal M_0=\mathcal M \preceq \mathcal M_1 \preceq \cdots \preceq \mathcal M_d$ and elements $b_1, \ldots, b_d$ satisfying the conditions in Lemma \ref{lem:infetesimal}.
We may assume that $\mathcal M_i$ are elementary substructures of the monster model $\mathbb M$ for all $1 \leq i \leq d$.
We also have $b_d \not\in \mydcl(M_{d-1})$ by Lemma \ref{lem:infetesimal2}.
It implies that the set $\{b_1,\ldots, b_d\}$ is $\mydcl$-independent over $M$.
It immediately follows from the definition of $\mydcl$-independent that $\{b_1,\ldots, b_d\}$ is $\mydcl$-independent over $A$.
We have $(b_1,\ldots, b_d) \in \pi(X)$ because $\pi(X)$ is open.
Set $b_{j+d}=f_j(b_1,\ldots, b_d)$ for all $1 \leq j \leq n-d$.
Since $\{b_1,\ldots, b_d\}$ is $\mydcl$-independent over $A$, we have $\myrk(\{b_1,\ldots, b_n\}/A) \geq d$.
We also have $(b_1,\ldots, b_n) \in X^{\mathbb M}$ because the graph of $f$ is contained in $X$.
We have demonstrated the inequality $\myrk(X/A) \geq d$.
\end{proof}

We introduce the notion of generic points for future use.

\begin{definition}[Generic point]
Let $\mathcal L$, $T$, $\mathcal M$, $A$ and $X$ be the same as in Theorem \ref{thm:rank}.
Let $\mathbb M$ be a monster model of $T$.
There exists $\overline{c} \in X^{\mathbb M}$ with $\myrk^{\mydcl}(\overline c/A)=\dim X$ by Theorem \ref{thm:rank}.
Such a point $\overline{c} \in X^{\mathbb M}$ is called a \textit{generic point} of $X$.
\end{definition}

\section{First-order topological structure}\label{sec:first-order}

Pillay defined a first-order topological structure and the dimension rank for definable sets.
\begin{definition}
Let $\mathcal L$ be a language and $\mathcal M=(M,\ldots)$ be an $\mathcal L$-structure.
The structure $\mathcal M$ is called a \textit{first-order topological structure} if there exists an $\mathcal L$-formula $\phi(x,\overline{y})$ such that the family $\{\phi(x,\overline{a})\;|\; \overline{a} \subseteq M\}$ is a basis for a topology on $M$.
When $\mathcal M$ is an expansion of a dense linear order, then $\mathcal M$ is a first-order topological structure.

Recall that a set is \textit{constructible} if it is a finite boolean combination of open sets.
We consider the case in which any definable set is constructible such as the case in which the structure is a definably complete locally o-minimal structure \cite[Corollary 3.10]{Fuji4}.
For a definable set $X$, an ordinary valued dimension rank $\mydim(X)$ is defined as follows:
\begin{enumerate}
\item[(1)] If $X$ is nonempty, then $\mydim(X) \geq 0$. Otherwise, set $\mydim(X)=-\infty$.
\item[(2)] If $\mydim(X) \geq \alpha$ for all $\alpha<\delta$, where $\delta$ is limit, then $\mydim(X) \geq \delta$.
\item[(3)] $\mydim(X) \geq \alpha+1$ if there exists a definable closed subset $Y$ of $X$ such that $Y$ has an empty interior in $X$ and $\mydim(Y) \geq \alpha$.
\end{enumerate}
We put $\mydim(X)=\alpha$ if $\mydim(X) \geq \alpha$ and $\mydim(X) \not\geq \alpha+1$.
We set $\mydim(X)=\infty$ when $\mydim(X) \geq \alpha$ for all $\alpha$.
\end{definition}

We begin to prove that $\dim X=D(X)$ when $X$ is a definable set.
\begin{lemma}\label{lem:for_top_st_dim}
Let $\mathcal M=(M,<,\ldots)$ be a definably complete locally o-minimal structure, and let $X$ and $Y$ be definable subsets of $M^n$ with $Y \subseteq X$ and $\dim X=\dim Y$.
Then, the set $Y$ has a nonempty interior in $X$.
\end{lemma}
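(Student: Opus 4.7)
The plan is to argue by contrapositive: assume $Y$ has empty interior in $X$ (with respect to the subspace topology inherited from $M^n$), and deduce $\dim Y < \dim X$. Set $Z := X \setminus Y$. If $Z = \emptyset$ then $Y = X$, and because $\dim X = \dim Y \geq 0$ forces $X \neq \emptyset$, the whole set $X$ is a nonempty open subset of itself contained in $Y$, contradicting the assumption. So I may assume $Z$ is nonempty.

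Next I would unpack the empty-interior condition. It says precisely that for every open $U \subseteq M^n$ with $U \cap X \neq \emptyset$, one has $U \cap Z \neq \emptyset$ (otherwise $U \cap X$ would be a nonempty relatively open subset of $X$ sitting inside $Y$). In particular, every point of $X$, and a fortiori every point of $Y$, lies in the closure $\overline{Z}$ of $Z$ in $M^n$. Combined with the tautology $Y \cap Z = \emptyset$, this yields the inclusion
$$Y \;\subseteq\; \overline{Z} \setminus Z \;\subseteq\; \partial Z.$$

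Now I invoke Proposition \ref{prop:prelim_dim}: parts (2) and (4) together give
$$\dim Y \;\leq\; \dim \partial Z \;<\; \dim Z \;\leq\; \dim X,$$
which contradicts the hypothesis $\dim Y = \dim X$. This establishes the lemma.

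The argument is short and I do not anticipate any real obstacle; the only point requiring care is the translation between the two equivalent formulations of "empty interior in $X$" (no nonempty relatively open subset of $X$ is contained in $Y$, versus $Z$ being dense in $X$ when viewed inside $M^n$), and making sure the chain $\overline{Z}\setminus Z \subseteq \partial Z$ is valid for the frontier convention used in the paper, which it is regardless of whether $\partial Z$ means $\overline{Z} \setminus Z$ or $\overline{Z} \setminus \operatorname{int} Z$.
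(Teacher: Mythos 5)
Your proof is correct and is essentially the contrapositive of the paper's own argument: both hinge on applying Proposition \ref{prop:prelim_dim}(2),(4) to the frontier of $X \setminus Y$ --- the paper finds a point of $Y$ outside $\overline{X \setminus Y}$ and extracts an open box around it, while you observe that if no such point existed then $Y \subseteq \partial(X \setminus Y)$, forcing $\dim Y < \dim X$. No gaps; the only caveat is that the final inequality needs $\partial$ to denote the frontier $\overline{Z} \setminus Z$ rather than the topological boundary, which is indeed the paper's convention.
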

\begin{proof}
Set $d=\dim X=\dim Y$.
Consider the set $Z=\overline{X \setminus Y} \cap Y$.
We have $\dim Z<d$ by  Proposition \ref{prop:prelim_dim}(4) because $Z \subseteq \partial (X \setminus Y)$.
The set $Z$ is strictly contained in $Y$ by Proposition \ref{prop:prelim_dim}(2).
Take a point $x \in Y \setminus Z$.
By the definition of $Z$, there exists an open box $B$ containing the point $x$ such that $B$ has an empty intersection with $X \setminus Y$.
In other words, the intersection $X \cap V$ is contained  in $Y$.
It means that $Y$ has a nonempty interior in $X$.
\end{proof}

\begin{proposition}\label{prop:top_st_dim}
Let $\mathcal M=(M,<,\ldots)$ be a definably complete locally o-minimal structure and $X$ be a definable set.
We have $\dim X=\mydim(X)$.
\end{proposition}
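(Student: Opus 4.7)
The plan is to prove $\dim X = \mydim(X)$ by induction on $d := \dim X$, establishing both inequalities in parallel. The base cases are immediate: if $d = -\infty$ then $X$ is empty and both values equal $-\infty$; if $d = 0$ then $X$ is discrete by Proposition \ref{prop:prelim_dim}(1), so every singleton of $X$ is open in its subspace topology, no nonempty closed subset of $X$ has empty interior in $X$, and hence $\mydim(X) = 0$.

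For the upper bound $\mydim(X) \leq d$ in the inductive step, I argue by contradiction. If $\mydim(X) \geq d+1$, there is a definable closed $Y \subseteq X$ with empty interior in $X$ and $\mydim(Y) \geq d$. Since $\dim Y \leq d$ and the induction hypothesis applied to $Y$ gives $\mydim(Y) = \dim Y$, we must have $\dim Y = d$; Lemma \ref{lem:for_top_st_dim} then forces $Y$ to have nonempty interior in $X$, contradicting the choice of $Y$.

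For the lower bound $\mydim(X) \geq d$ with $d \geq 1$, the plan is to construct a definable closed $Y \subseteq X$ with $\dim Y = d - 1$ and empty interior in $X$; applying the induction hypothesis to $Y$ then yields $\mydim(Y) \geq d - 1$, whence $\mydim(X) \geq d$ by the definition of dimension rank. Using Proposition \ref{prop:quasi} I decompose $X$ into a disjoint union of quasi-special submanifolds and fix a component $C$ of dimension $d$; after permuting coordinates I may assume $C$ is $\pi$-quasi-special for the projection $\pi : M^n \to M^d$ onto the first $d$ coordinates, so that $\pi(C)$ is a nonempty open subset of $M^d$ containing an open box $I_1 \times \cdots \times I_d$. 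The candidate is $Y := X \cap \{x_1 = c\}$ for a suitably chosen $c \in I_1$; it is automatically closed in $X$, and projecting $Y$ to coordinates $2, \ldots, d$ produces a set containing $I_2 \times \cdots \times I_d$, giving $\dim Y \geq d-1$.

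The main obstacle is to choose $c \in I_1$ so that simultaneously $\dim Y = d-1$ (rather than $d$) and $Y$ has empty interior in $X$. The first condition fails exactly on $\{c \in M : \dim(X \cap \{x_1 = c\}) \geq d\}$, which must be discrete by the fiber-dimension formula (Proposition \ref{prop:prelim_dim}(5)); otherwise $\dim X$ would exceed $d$. The second fails exactly on the projection $\pi_1(X^*)$ of the locally vertical locus $X^* := \{y \in X : \text{some open } V \ni y \text{ in } M^n \text{ satisfies } V \cap X \subseteq \{x_1 = \pi_1(y)\}\}$, and the hardest step is to verify that $\pi_1(X^*)$ too is discrete. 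I expect to prove this by analyzing the quasi-special decomposition component by component, using strong local monotonicity (Proposition \ref{prop:monotonicity}) to argue that on each component, membership in $X^*$ forces the first-coordinate function on the base to be locally constant and hence to take values in a discrete subset of $M$. Choosing $c \in I_1$ outside this finite union of discrete bad sets then produces the required $Y$ and closes the induction.
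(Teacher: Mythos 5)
Your induction scheme, base case, and upper bound are the same as the paper's: the paper likewise fixes an arbitrary definable closed $Y\subseteq X$ with empty interior in $X$, invokes Lemma \ref{lem:for_top_st_dim} to get $\dim Y<d$, and applies the induction hypothesis. The difference is in the lower bound. The paper localizes first: using $Z_1=\overline{X\setminus X_1}\cap X_1$ it finds an open box $U$ over which $X$ coincides with the graph of a single continuous section of $\pi$, takes a closed box $B\subseteq\pi(U)$, and slices that graph with the hyperplane $x_d=a_d$; the resulting $Y$ is closed in $X$, has dimension $d-1$ by Proposition \ref{prop:prelim_dim}(5), and has empty interior in $X$ for the trivial reason that one can perturb the $d$-th base coordinate inside $B$ while staying on the graph. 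You instead slice all of $X$ with a global hyperplane $\{x_1=c\}$ and must then excise two bad sets of values of $c$. Your treatment of $E$ is fine (partition by exact fiber dimension and apply Proposition \ref{prop:prelim_dim}(5)), but the discreteness of $\pi_1(X^*)$ is genuinely the crux and you leave it at the level of a plan (``I expect to prove this by\dots''). The claim is true and your plan is workable, but several things still need to be written down: components whose base projection involves the first coordinate cannot meet $X^*$ when positive-dimensional (a coordinate function is never locally constant on an open box); for the remaining components one must shrink the witnessing box of a point of $X^*$ into the neighborhood on which the quasi-special section is defined in order to conclude that $\pi_1$ composed with the section is locally constant; and one must prove that a definable locally constant function on an open set has discrete image (e.g.\ via the fact that its level sets are open together with Proposition \ref{prop:prelim_dim}(5), or via Proposition \ref{prop:monotonicity} in one variable). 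None of this is routine citation, so as written the lower bound is incomplete; the paper's localized slice is the cheaper route precisely because it makes the ``locally vertical locus'' irrelevant.
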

\begin{proof}
It is obvious when $X$ is an empty set.
We concentrate on the case in which $X$ is not empty.
Let $M^n$ be the ambient space of $X$.
Set $d=\dim(X)$.
We prove the proposition by induction on $d$.
When $d=0$, the definable set $X$ is discrete and closed.
In particular, any nonempty definable subset of $X$ has a nonempty interior in $X$.
It implies that $\mydim(X)=0$.

We next consider the case in which $d>0$.
Let $Y$ be an arbitrary definable closed subset of $X$ having an empty interior in $X$.
We have $\dim Y<d$ by Lemma \ref{lem:for_top_st_dim}.
We get $\mydim(Y)<d$ by the induction hypothesis.
It means that $\mydim(X) \leq d$.

We demonstrate the opposite inequality $\mydim(X) \geq d$.
We have only to construct a definable closed subset $Y$ of $X$ such that $\mydim(Y)=d-1$ and $Y$ has an empty interior in $X$.
We decompose $X$ into quasi-special submanifolds $X_1, \ldots, X_t$ by Proposition \ref{prop:quasi}.
At least one of $X_1,\ldots, X_t$ is of dimension $d$  by Proposition \ref{prop:prelim_dim}(2).
We may assume that $\dim X_1=d$ without loss of generality.
Let $\pi:M^n \rightarrow M^d$ be a coordinate projection such that $X_1$ is a $\pi$-quasi-special submanifold.
Note that $\pi(X_1)$ is an open set, and, as a consequence, it is of dimension $d$.
We may assume that $\pi$ is the projection onto the first $d$ coordinates by permuting the coordinates if necessary.

Set $Z_1=\overline{X \setminus X_1} \cap X_1$.
It is of dimension smaller than $d$ by Proposition \ref{prop:prelim_dim}(2),(4) because $Z_1$ is contained in the frontier $\partial (X \setminus X_1)$.
Its projection image $\pi(Z_1)$ is also of dimension smaller than $d$ by Proposition \ref{prop:prelim_dim}(3).
We have $\dim \pi(X_1) \setminus \pi(Z_1)=d$ by Proposition \ref{prop:prelim_dim}(2).
It implies that $\pi(X_1) \setminus \pi(Z_1)$ is not an empty set.
Take a point $x \in X_1$ with $\pi(x) \not\in \pi(Z_1)$.
By the definition of $Z_1$ and the assumption that $X_1$ is $\pi$-quasi-special submanifold, we can take an open box $U$ in $M^n$ containing the point $x$ such that $U \cap (X \setminus X_1)=\emptyset$ and $U \cap X_1$ is the graph of a definable continuous map defined on $\pi(U)$.
In particular, we have $U \cap X \subseteq X_1$.

Take a nonempty closed box $B$ contained in $\pi(U)$.
The definable set $W=\pi^{-1}(B) \cap U \cap X$ is a definable subset of $X$ which is simultaneously the graph of a definable continuous function defined on $B$.
In particular, the definable set $W$ is closed in $X$.
Take a point $a=(a_1,\ldots,a_d) \in \myint(B)$ and consider the hyperplane $H=\{x=(x_1,\ldots, x_d) \in M^d\;|\; x_d=a_d\}$.
We obviously have $\dim H \cap B = d-1$.
Put $Y= \pi^{-1}(H \cap B) \cap W$.
It is closed in $X$.
Using Proposition \ref{prop:prelim_dim}(5), we get $\dim Y=d-1$.
The induction hypothesis implies that $\mydim(Y)=d-1$.
By the construction, it is obvious that $Y$ has an empty interior in $X$.
We have finally demonstrated the inequality $\mydim(X) \geq d$.
\end{proof}


\end{document}